\documentclass[11pt]{amsart}

\pdfoutput=1

\usepackage[T1]{fontenc}
\usepackage{amsmath,amssymb}
\usepackage[a4paper,top=3cm,bottom=3cm,inner=3.5cm,outer=2.7cm,
            nofoot,headsep=1.5cm]{geometry}
\usepackage[colorlinks=true,linkcolor=blue,citecolor=blue,urlcolor=blue]{hyperref}
\usepackage{doi}

\theoremstyle{plain}
\newtheorem{theorem}{Theorem}[section]
\newtheorem{fact}[theorem]{Fact}
\newtheorem{lemma}[theorem]{Lemma}

\theoremstyle{definition}
\newtheorem{defn}[theorem]{Definition}
\newtheorem{remark}[theorem]{Remark}

\newcommand{\qftp}{\operatorname{qftp}}

\newcommand{\proofversion}{11}
\numberwithin{equation}{section}
\hypersetup{
  pdftitle={SOP2 = SOP3},
  pdfauthor={Artem Chernikov},
  pdfsubject={Version \proofversion}
}

\title[$\mathrm{SOP}_2=\mathrm{SOP}_3$]{$\mathrm{SOP}_2=\mathrm{SOP}_3$}
\author{Artem Chernikov}

\begin{document}

\begin{abstract}
The classes of $\mathrm{SOP}_2$ and
$\mathrm{SOP}_3$ first-order theories coincide. This answers a question
of D\v{z}amonja and Shelah from 2004 \cite{DzamonjaShelah}.
\end{abstract}
\maketitle

\section{Introduction}
Shelah formulated the strict order property $\mathrm{SOP}$, and later introduced the finite-cycle hierarchy
$\mathrm{SOP}_n$, for $n\geq 3$ \cite{ShelahSOP},
 in the context of model-theoretic classification for ``unordered'' structures beyond stability \cite{Shelah1971}.  He proved $
 \mathrm{SOP}\Longrightarrow\mathrm{SOP}_{n+1}
 \Longrightarrow\mathrm{SOP}_n\Longrightarrow\mathrm{TP}$, 
where a theory is simple precisely when it does not have the tree
property $\mathrm{TP}$ \cite{shelah1980simple}, and gave examples separating these classes.  D\v{z}amonja and Shelah subsequently introduced the tree
configurations $\mathrm{SOP}_2$ and $\mathrm{SOP}_1$ and
proved $
 \mathrm{SOP}_3\Longrightarrow\mathrm{SOP}_2
 \Longrightarrow\mathrm{SOP}_1\Longrightarrow\mathrm{TP}$. 
They asked whether either of the first two implications is reversible 
\cite{DzamonjaShelah} (an example  $T^{\ast}_{\textrm{feq}}$ separating $\mathrm{SOP}_1$ and $\mathrm{TP}$ was given in \cite{ShelahUsvyatsov}; there  is a gap in the proof there  \cite[footnote on p.22]{harrison2013independence},  a different  proof was given in \cite{ChernikovRamsey}).

These properties give strong non-structure results. Shelah proved that every theory with $\mathrm{SOP}_3$ is maximal in Keisler's order \cite{ShelahSOP}, and this was strengthened to the interpretability-order $\triangleleft^*$-maximality by Shelah and Usvyatsov \cite{ShelahSOP, ShelahUsvyatsov}. Conversely, it was shown that, under GCH,  $\triangleleft^*$-maximality implies $\mathrm{SOP}_2$ \cite{DzamonjaShelah, ShelahUsvyatsov}. More recently, 
Malliaris and Shelah relaxed the sufficient hypothesis for $\triangleleft^*$-maximality from $\mathrm{SOP}_3$ to $\mathrm{SOP}_2$, obtaining an equivalence  
\cite{MalliarisShelahCSP, MalliarisShelahApplications}.  On the other hand, theories without these properties admit positive structure theory. For simple  theories \cite{shelah1980simple} it was developed in the 1990s starting with \cite{hrushovski2002pseudo, kim1998forking, kim1997simple} centered around forking-independence (see \cite{wagner2000simple, kim2013simplicity}); and more recently for $\mathrm{NSOP}_1$ theories, centered around Kim-independence \cite{ChernikovRamsey, KaplanRamsey, KaplanRamseyShelah, KaplanRamseyTransitivity, DobrowolskiKimRamsey, ChernikovKimRamsey, KimKimLee}.

The two questions of  D\v{z}amonja and Shelah   were repeatedly highlighted in subsequent work. In a recent breakthrough, Mutchnik proved $\mathrm{SOP}_2 = \mathrm{SOP}_1$
\cite{Mutchnik}.  Partial results around the question $\mathrm{SOP}_2 \stackrel{?}{=} \mathrm{SOP}_3$ were obtained in 
\cite{conant2017axiomatic, KRS, mutchnik2026conant, mutchnik2023properties, MutchnikApplications}. Here we show:

\begin{theorem}\label{thm:main}
 $\mathrm{SOP}_2\Longrightarrow\mathrm{SOP}_3$ (hence $\mathrm{SOP}_1 = \mathrm{SOP}_2=\mathrm{SOP}_3$ by the above).
\end{theorem}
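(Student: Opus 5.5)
Since $\mathrm{SOP}_2=\mathrm{SOP}_1$ by Mutchnik \cite{Mutchnik}, it suffices to show that $\mathrm{NSOP}_3$ implies $\mathrm{NSOP}_1$. We argue by contrapositive: assume $T$ is $\mathrm{NSOP}_3$ and has $\mathrm{SOP}_1$, and we build a formula with $\mathrm{SOP}_3$. The natural starting point is the characterization of $\mathrm{SOP}_1$ via Kim-independence over models. If $T$ is $\mathrm{SOP}_1$, then Kim's lemma or symmetry of $\mathop{\smile\!\!\!\!\!\!|}^K$ fails \cite{KaplanRamsey}. Concretely, there are a model $M$, a formula $\varphi(x,b)$ and an $M$-invariant (even coheir) Morley sequence $(b_i)_{i<\omega}$ in $\operatorname{tp}(b/M)$ such that $\{\varphi(x,b_i)\}_i$ is consistent, while $\varphi(x,b)$ Kim-divides along some other invariant Morley sequence. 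Equivalently, there is the array formulation of $\mathrm{SOP}_1$: sequences $(c_{i,0},c_{i,1})_{i<\omega}$ with $c_{i,0}\equiv_{\bar c_{<i}} c_{i,1}$, where $\{\varphi(x,c_{i,0})\}$ is consistent and $\{\varphi(x,c_{i,1})\}$ is $2$-inconsistent.

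The plan is to extract from such a configuration an indiscernible sequence $(a_i)_{i\in\mathbb Q}$ together with a formula $\psi(y,z)$. This formula should satisfy $\psi(a_i,a_j)$ for $i<j$ and make every $3$-cycle $\psi(y_0,y_1)\wedge\psi(y_1,y_2)\wedge\psi(y_2,y_0)$ inconsistent; that is exactly $\mathrm{SOP}_3$, in Shelah's formulation with $\psi$ split into two disjoint pieces. I would take $a_i=(d_i,e_i)$, where $e_i$ realizes $\varphi(x,\cdot)$ along an initial segment and $d_i$ is the parameter. Then $\psi(a_i,a_j)$ would express ``$e_i$ satisfies $\varphi(x,d_j)$ and the pair $(d_i,d_j)$ is of the consistent type'', while $\neg\psi$ is forced along the inconsistent type. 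Using $\mathrm{NSOP}_3$ inductively, for instance through the known consequences of $\mathrm{NSOP}_3$ on witnessing sequences in \cite{mutchnik2023properties, MutchnikApplications, KRS}, one can symmetrize: in an $\mathrm{NSOP}_3$ theory, the types $q(x,y)$ of the ``forward'' pairs in a coheir sequence and their reverses cannot form a configuration in which forward and backward types are mutually exclusive along cycles. So $\mathrm{NSOP}_3$ gives a form of symmetry for coheir independence over models, namely $a\mathop{\smile\!\!\!\!\!\!|}^{u}_M b \Rightarrow$ some weak reverse independence. I would then combine this with the Kaplan--Ramsey criterion, which says that symmetry of $\mathop{\smile\!\!\!\!\!\!|}^K$ implies $\mathrm{NSOP}_1$, to conclude.

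The key steps are as follows. First, reduce to a failure of symmetry or of the chain condition for $\mathop{\smile\!\!\!\!\!\!|}^K$, witnessed by coheir sequences over a model $M$. Second, iterate the failure, via a tree/Ramsey argument of the kind in Mutchnik's proof, to obtain an indiscernible sequence $(a_i)$ over $M$ in which $\operatorname{tp}(a_i,a_j)$ for $i<j$ implies a formula $\theta(a_i,a_j)$ with $\theta(y,z)\wedge\theta(z,w)\wedge\theta(w,y)$ inconsistent. Third, verify that inconsistency of $3$-cycles, using $2$-inconsistency of the Kim-dividing instances: a $3$-cycle would produce two parameters in the inconsistent relation both satisfied by a common realization.

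The main obstacle is the second step. $\mathrm{SOP}_1$ only provides a local inconsistency (pairwise along one branch), whereas a $3$-cycle requires incompatibility of the cyclic arrangement. To get this, I would try to use the $\mathrm{NSOP}_3$ assumption twice: once to obtain a ``strong'' transitivity of coheir independence, and once to lift Kim-dividing along one sequence to all sequences in a cycle of length three. If that fails, I would instead look for a direct combinatorial construction that encodes the $\mathrm{SOP}_2$ tree into a cyclic order on $\mathbb Q$-indexed parameters.
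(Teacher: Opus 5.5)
There is a genuine gap. The step that carries all the content is your Step 2: producing, from the $\mathrm{SOP}_1$ configuration, a sequence and a formula with inconsistent $3$-cycles. It is never carried out; you yourself leave it at ``I would try \dots; if that fails \dots''.

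The mechanism you suggest for it is circular. By Kaplan--Ramsey, symmetry of Kim-independence over models is \emph{equivalent} to $\mathrm{NSOP}_1$. So the claim that $\mathrm{NSOP}_3$ yields enough symmetry (of coheir independence, or some ``weak reverse independence'') to feed into their criterion is, given Mutchnik's $\mathrm{SOP}_1=\mathrm{SOP}_2$, just a restatement of the theorem. The works you cite contain only partial results toward this question, not such a symmetry theorem.

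Your candidate relation $\psi$ is also not usable as stated. ``$(d_i,d_j)$ is of the consistent type'' is not a formula, and nothing in the sketch explains why three instances of $\psi$ arranged in a cycle would be contradictory. Finally, working with $\mathrm{SOP}_1$ and Kim-independence rather than with the $\mathrm{SOP}_2$ tree (available anyway by Mutchnik) discards the structure you need. An $\mathrm{SOP}_2$ tree gives pairwise inconsistency of $\varphi$-instances at \emph{all} incomparable nodes, which is exactly the non-local inconsistency you identify as missing.

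Your fallback, encoding the $\mathrm{SOP}_2$ tree directly, is what the paper does, via two concrete ingredients you lack.

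The first is a criterion replacing your Step 3. If $\models\alpha(v_i;p_j)\wedge\beta(v_j;p_i)$ and $\{\alpha(v;p_i),\beta(v;p_j)\}$ is inconsistent for all $i<j$, then $R((v,p);(v',p')):=\neg\exists w\,(\alpha(w;p)\wedge\beta(w;p'))\wedge\alpha(v;p')\wedge\beta(v';p)$ has $\mathrm{SOP}_3$. The inconsistency clause is built into $R$ itself.

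The second is a treetop-indiscernible witness $(a_\eta)_{\eta\in\omega^{\leq\omega}}$ whose leaves satisfy $\varphi(a_\sigma;a_\eta)$ for all ancestors $\eta$, together with a dichotomy. Let $s_j=a_{0^j}$, $c_t=a_{0^t{}^\frown1^\frown0^\omega}$, and let $I(y,y')$ say that $\varphi(x;y)\wedge\varphi(x;y')$ has no solution. Consider $\Gamma_i(y)=\{\varphi(c_t,y):t\leq i\}\cup\{I(s_j,y):j>i\}$.
\begin{itemize}
\item If every $\Gamma_i$ is consistent, take realizations $v_i$ with $p_i=(s_i,c_i)$, $\alpha=I(y_0,v)$ and $\beta=\varphi(y_1,v)$; these satisfy the criterion at once.
\item Otherwise some $\Gamma_m$ has a finite obstruction. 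Treetop indiscernibility transports it to nodes with $\mu^i\triangleleft\lambda^j_t$ and $\rho^i_l\perp\mu^j$ for $i<j$, while $((\lambda^i_t)_t,(\rho^j_l)_l)$ has the quantifier-free type of the obstruction configuration. The criterion then holds again with $\alpha=\bigwedge_t\varphi(x_t,v)$ and $\beta=\bigwedge_l I(u_l,v)$.
\end{itemize}
Neither this case split nor the index transport is something your symmetrization heuristic supplies.
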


\noindent Combined with the aforementioned results, this establishes a robust dividing line strictly above simplicity and below NSOP. The proof starts with an appropriately indiscernible witness to $\mathrm{SOP}_2$ as in Kaplan, Ramsey and Simon \cite{KRS} (based on the modeling results in \cite{takeuchi2012existence, kim2014tree}), and directly constructs  a $\mathrm{SOP}_3$ formula splitting into two cases via tree indiscernible  manipulation. 

\subsection*{AI disclosure} The proof was found using ChatGPT 5.6 and simplified and
streamlined by the author.

\subsection*{Acknowledgements}
We thank Nick Ramsey for suggesting a correction to the proof of \cite[Lemma~3.8]{KRS}, and Itay Kaplan and Scott Mutchnik for their comments on the preliminary  version of the paper. Chernikov was partially supported by the NSF
Research Grants DMS-2246598, DMS-2554164 and by the Alexander von Humboldt Foundation.

\section{Preliminaries}

\subsection{Trees}
Let $
 \omega^{<\omega}=\bigcup_{n<\omega}\omega^n$, 
 $\omega^{\leq\omega}=\omega^{<\omega}\cup\omega^\omega$. 
For $\eta,\nu\in\omega^{\leq\omega}$, $\eta\unlhd\nu$ means that $\eta$ is a (non-strict) initial segment of
  $\nu$, $\eta\perp\nu$ means that neither sequence is an initial segment
  of the other, $\eta\wedge\nu$ is their longest common initial segment, and $\leq_{\mathrm{lex}}$ is the lexicographic order. We write $\triangleleft$ and $<_{\mathrm{lex}}$ for the strict versions of the corresponding orders. If $\eta\unlhd\nu$, we call $\eta$ an \emph{ancestor} of $\nu$ and
$\nu$ a \emph{descendant} of $\eta$; it is a \emph{proper ancestor}
when $\eta\triangleleft\nu$.
We write $\eta\upharpoonright n$ for the initial segment of length
$n$, $\eta^\frown\nu$ for concatenation, $0^n$ for the sequence of
$n$ zeros, and $0^\omega$ for the constant zero sequence (thus
$0^0$ is the empty sequence).  A \emph{branch} is the
set of finite initial segments of some member of $\omega^\omega$. We regard $\omega^{\leq\omega}$ as an $L_{0,P} = (\unlhd,\wedge,\leq_{\mathrm{lex}},P)$-structure, with
$P(\omega^{\leq\omega})=\omega^\omega$, and call the elements of $P$ \emph{leaves}.   

\subsection{\texorpdfstring{$\mathrm{SOP}_2$ and $\mathrm{SOP}_3$}
                           {SOP2 and SOP3}} Throughout, $T$ is a complete $L$-theory, $\mathbb M \models T$ is a monster model. 
\begin{defn}\label{def: SOP2}
	A formula $\varphi(x;y)$ has $\mathrm{SOP}_2$ if there are tuples
$(b_\eta)_{\eta\in\omega^{<\omega}}$ such that:
\begin{enumerate}
\item for every $\sigma\in\omega^\omega$, the set $
    \{\varphi(x;b_{\sigma\upharpoonright n}):n<\omega\}$ 
  is consistent;
\item whenever $\eta,\nu\in\omega^{<\omega}$ satisfy
  $\eta\perp\nu$, the pair
  $\{\varphi(x;b_\eta),\varphi(x;b_\nu)\}$ is inconsistent.
\end{enumerate}
The theory $T$ has $\mathrm{SOP}_2$ if one of its formulas does. (The original definition \cite{DzamonjaShelah} used the tree $2^{<\omega}$ instead of $\omega^{<\omega}$, but this is easily equivalent, see e.g.~\cite{kim2011notions}.)
\end{defn}

\begin{defn}\label{def: SOP3 rel}\cite{ShelahSOP} 
	A formula $Q(z;z')$, with $z$ and $z'$ of the same 
sort, has $\mathrm{SOP}_3$ if
there are tuples $(d_i)_{i<\omega}$ such that $\models Q(d_i;d_j)$ for every $i<j$,
but  $
 Q(z_0;z_1)\wedge Q(z_1;z_2)\wedge Q(z_2;z_0)$ 
is inconsistent.   The theory $T$ has $\mathrm{SOP}_3$ if
such a formula exists.
\end{defn}

\noindent The following is a variant of \cite[Fact 6.3]{Mutchnik} (which in turn corrects \cite[Proposition 7.2]{conant2017axiomatic}).
\begin{lemma}\label{lem:triangle-criterion}
Let $\alpha(v;p), \beta(v;p)$ be formulas and 
$(v_i,p_i)_{i<\omega}$ tuples so that for all $i<j$,
\begin{enumerate}
\item $\models\alpha(v_i;p_j)\wedge\beta(v_j;p_i)$;
\item $\{\alpha(v;p_i),\beta(v;p_j)\}$ is inconsistent.
\end{enumerate}
Then $T$ has $\mathrm{SOP}_3$.
\end{lemma}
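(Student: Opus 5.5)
The plan is to write down an explicit $\mathrm{SOP}_3$ formula on pairs $z=(v,p)$, $z'=(v',p')$, built from $\alpha$ and $\beta$ together with one extra conjunct that encodes the inconsistency in hypothesis (2). Hypothesis (2) only concerns the given parameters $p_i$, so by itself it cannot exclude a $3$-cycle of arbitrary tuples. The extra conjunct turns it into a statement about the tuples themselves. Concretely, let
\[
Q(z;z') \;:=\; \alpha(v;p')\wedge\beta(v';p)\wedge\neg\exists u\,\bigl(\alpha(u;p)\wedge\beta(u;p')\bigr),
\]
and put $d_i:=(v_i,p_i)$.

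First, we check that $\models Q(d_i;d_j)$ for all $i<j$.
\begin{itemize}
\item The first two conjuncts read $\alpha(v_i;p_j)\wedge\beta(v_j;p_i)$, which hold by hypothesis (1).
\item The third conjunct reads $\neg\exists u\,(\alpha(u;p_i)\wedge\beta(u;p_j))$. This is exactly the inconsistency of $\{\alpha(v;p_i),\beta(v;p_j)\}$ from hypothesis (2).
\end{itemize}

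Second, we show that $Q(z_0;z_1)\wedge Q(z_1;z_2)\wedge Q(z_2;z_0)$ is inconsistent. Write $z_k=(v_k,p_k)$ and suppose all three instances hold.
\begin{itemize}
\item From $Q(z_0;z_1)$ we get $\beta(v_1;p_0)$.
\item From $Q(z_1;z_2)$ we get $\alpha(v_1;p_2)$.
\item Hence $v_1$ witnesses $\exists u\,(\alpha(u;p_2)\wedge\beta(u;p_0))$.
\item But the third conjunct of $Q(z_2;z_0)$ is precisely $\neg\exists u\,(\alpha(u;p_2)\wedge\beta(u;p_0))$, a contradiction.
\end{itemize}
So $Q$ witnesses $\mathrm{SOP}_3$ in the sense of Definition~\ref{def: SOP3 rel}, with $(d_i)_{i<\omega}$ as the witnessing sequence.

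The only step needing care is the choice of this extra conjunct; once it is in place, both verifications are immediate. I do not expect to need indiscernibility or any extraction of an indiscernible sequence. The one thing to confirm is that the variables $z,z'$ have the same sort, which holds because each $d_i$ has the same shape $(v_i,p_i)$.
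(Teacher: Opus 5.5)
Your proof is correct and is essentially the paper's argument: your $Q$ is the paper's formula $R$ up to the order of the conjuncts, and both verifications proceed the same way. The only cosmetic difference is in the cycle argument, where you use $v_1$ as the witness against the third conjunct of $Q(z_2;z_0)$, whereas the paper uses $v_0$ against the third conjunct of $R(z_1;z_2)$; this is just a rotation of the $3$-cycle.
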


\begin{proof}
Define $
 R((v,p);(v',p')):=
 \neg\exists w\bigl(\alpha(w;p)\wedge\beta(w;p')\bigr)
 \wedge\alpha(v;p')\wedge\beta(v';p)$. 
The two hypotheses give
$\models R((v_i,p_i);(v_j,p_j))$ for all $i<j$.  If three tuples
$z_i=(v_i,p_i)$, for $i<3$, satisfied $ R(z_0;z_1)\wedge R(z_1;z_2)\wedge R(z_2;z_0)$, they would give, respectively, $
 \alpha(v_0;p_1)$, $
 \neg\exists w\bigl(\alpha(w;p_1)\wedge\beta(w;p_2)\bigr)$, $\beta(v_0;p_2)$ --- 
a contradiction.  Thus $R$ witnesses $\mathrm{SOP}_3$.
\end{proof}

 \subsection{Treetop indiscernibles}
\begin{defn}
	An array $(a_\eta)_{\eta\in\omega^{\leq\omega}}$ (where $a_\eta$ are tuples in $\mathbb{M}$, possibly of different sorts)  is a \emph{treetop indiscernible} if for any finite tuples $\bar\eta, \bar\nu$ from $\omega^{\leq\omega}$, $
 \qftp_{L_{0,P}}(\bar\eta)=\qftp_{L_{0,P}}(\bar\nu) \ \Rightarrow \ 
 a_{\bar\eta}\equiv a_{\bar\nu}$. 
Here $a_{\bar\eta} = (a_{\eta_i} : i < n)$ for $\bar \eta = (\eta_i : i < n)$, and
$a_{\bar\eta}\equiv a_{\bar\nu}$ means that the two tuples have the same type in $\mathbb{M}$.
\end{defn}

\begin{defn}
	 We say that $(a_\eta)_{\eta\in\omega^{\leq\omega}}$ is
\emph{locally based on} $(e_\eta)_{\eta\in\omega^{\leq\omega}}$ if, for every index tuple
$\bar\eta$ and  formula $\theta$, there is an index tuple
$\bar\nu$ such that $
 \qftp_{L_{0,P}}(\bar\eta)=
 \qftp_{L_{0,P}}(\bar\nu)$ 
and $a_{\bar\eta}$ and $e_{\bar\nu}$ agree on $\theta$.  (In particular, if $\theta$ is true of every tuple
$e_{\bar\nu}$ having some fixed quantifier-free index type, then  $\theta$ is true of every corresponding
tuple $a_{\bar\eta}$.)
\end{defn}

\begin{fact}\cite[Lemma~3.8]{KRS}
\label{fact:modeling}
Given any array
$(e_\eta)_{\eta\in\omega^{\leq\omega}}$ of tuples, there is a treetop indiscernible
$(a_\eta)_{\eta\in\omega^{\leq\omega}}$ locally based on
$(e_\eta)$.
\end{fact}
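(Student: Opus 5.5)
The plan is to prove the Fact by compactness, reducing everything to a finite Ramsey statement about the index structure $\omega^{\leq\omega}$.

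\textbf{The type $\Gamma$.} Introduce variables $(x_\eta)_{\eta\in\omega^{\leq\omega}}$ and let $\Gamma$ consist of two families of formulas.
\begin{enumerate}
\item Indiscernibility: $\theta(x_{\bar\eta})\leftrightarrow\theta(x_{\bar\nu})$ for every formula $\theta$ and all finite $\bar\eta,\bar\nu$ with $\qftp_{L_{0,P}}(\bar\eta)=\qftp_{L_{0,P}}(\bar\nu)$.
\item Local basedness: for every finite $\bar\eta$ and every finite set $\Delta$ of formulas, the formula stating that the $\Delta$-type of $x_{\bar\eta}$ equals the $\Delta$-type of some $e_{\bar\nu}$ with $\qftp(\bar\nu)=\qftp(\bar\eta)$. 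This is a finite disjunction over the finitely many $\Delta$-types that actually occur.
\end{enumerate}
A realization of $\Gamma$ is a treetop indiscernible locally based on $(e_\eta)$. By compactness it suffices to satisfy each finite $\Gamma_0\subseteq\Gamma$.

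\textbf{Satisfying a finite fragment.} Such a $\Gamma_0$ mentions a finite set $A\subseteq\omega^{\leq\omega}$ of indices, tuples of length at most $n$, and a finite set $\Delta$ of formulas. Color each $n$-tuple $\bar\nu$ by the $\Delta$-type of $e_{\bar\nu}$; there are finitely many colors. Suppose we can find a qftp-preserving map $f\colon A\to\omega^{\leq\omega}$ such that, for tuples from $A$, the color of $e_{f(\bar\eta)}$ depends only on $\qftp(\bar\eta)$. Then setting $x_\eta:=e_{f(\eta)}$ satisfies $\Gamma_0$. Indiscernibility holds by monochromaticity, and local basedness holds with witness $\bar\nu=f(\bar\eta)$. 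So everything reduces to a Ramsey property for the class of finite $L_{0,P}$-substructures of $\omega^{\leq\omega}$: for a finite $A$ and finitely many colorings of the copies of sub-configurations of $A$, there should be a copy of $A$ on which each coloring depends only on the isomorphism type. Iterating over the finitely many qf-types of subtuples of $A$ handles all of them.

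\textbf{Proving the Ramsey property (main obstacle).} I would handle leaves by pushing them to a single finite level and then apply Milliken's tree theorem for strongly embedded subtrees of $\omega^{<\omega}$, in the form that gives ``$(\unlhd,\wedge,<_{\mathrm{lex}})$-structures are Ramsey'' (as in Takeuchi and Kim--Kim--Scow). The steps are:
\begin{enumerate}
\item Replace $A$ by an isomorphic configuration $A'$ in which every leaf $\sigma$ is replaced by a node of a fixed level $L$ lying above the levels of all non-leaf elements of $A$ and of all meets, so that the leaves become exactly the nodes of $A'$ at level $L$.
\item Work inside strong subtrees of height $L+1$. Transport a coloring of configurations in $\omega^{\leq\omega}$ to one in $\omega^{<\omega}$ via $\tau\mapsto\tau^\frown 0^\omega$ on top-level nodes.
\item Check that this transport preserves qf-type: every lower node lies strictly below level $L$, so the relations $\unlhd$, $\wedge$ and $<_{\mathrm{lex}}$ between $\tau^\frown0^\omega$ and such nodes are the same as for $\tau$, and $P$ holds exactly at the top level.
\end{enumerate}
The delicate point is this bookkeeping. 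One must make sure that strong embeddings preserve level-uniformity, so that the images of leaves stay at a common top level and remain leaves after the transport, and that the meet closure is respected.

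Given the Ramsey statement, compactness finishes the proof.
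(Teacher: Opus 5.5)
Your compactness reduction is fine, and you handle the leaves differently from the paper. The paper follows \cite[Lemma~3.8]{KRS}. It removes the leaves and colors only the nonleaf skeleton; that coloring is well defined only because the source tree is $s$-indiscernible, via \cite[Lemma~3.7]{KRS}. It then applies the Ramsey theorem for $L_0$-configurations and has to re-insert leaves into prescribed $\leq_{\mathrm{lex}}$-cuts of the monochromatic skeleton. Since an arbitrary copy may leave no room for them, the argument is run inside the sparse copy $H=h(\omega^{<\omega})$. You instead keep each leaf inside the configuration as a finite placeholder and complete it canonically by $\tau\mapsto\tau^\frown0^\omega$. Every copy then carries its leaves with it, the colorings are defined directly on $\omega^{<\omega}$, and you need neither $H$ nor an $s$-indiscernible source tree. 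This sidesteps the gap in \cite{KRS} and is arguably simpler.

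The Ramsey step, as written, needs repair: it mixes two theorems, and neither matches your step~3 as stated.
\begin{itemize}
\item \emph{Level-preserving version.} If you use strong embeddings (Milliken, or str-indiscernibility), the placeholders do stay on a common top level and your check in step~3 is right. But the color is then constant only on subtuples with the same level pattern, which is finer than the $L_{0,P}$-type. For example, $(\langle0\rangle,\langle1\rangle)$ and $(\langle0\rangle,\langle1,0\rangle)$ have the same $\qftp_{L_{0,P}}$ but different length comparisons. Since your step~(1) keeps the nonleaves' original levels, you would only get indiscernibility for $L_{0,P}$ plus length comparison. To fix this, choose $A'$ so that the nonleaves have lengths strictly increasing in $<_{\mathrm{lex}}$, with all placeholders on one level above them; this is easy to build along the finite meet tree of $A$. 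Inside such an $A'$, the level pattern of every subtuple is determined by its $L_{0,P}$-type.
\item \emph{$L_0$ version.} If you instead use the theorem that $(\unlhd,\wedge,<_{\mathrm{lex}})$-configurations are Ramsey, copies need not preserve levels. Then your justification ``every lower node lies strictly below level $L$'' is unavailable. It is also unnecessary. In $A'$ the placeholders have no proper descendants in the meet closure, and this is part of the $L_0$-type, so it holds in every copy. For such a node $\tau$, every other element of the configuration is either a proper ancestor of $\tau$ or incomparable with it. In either case, the map $\tau\mapsto\tau^\frown0^\omega$ preserves $\unlhd$, $\wedge$ and $<_{\mathrm{lex}}$ with that element. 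So $P$ holds exactly at the transported nodes and the $L_{0,P}$-type is preserved.
\end{itemize}
With either fix, your argument goes through.
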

\begin{remark}
\label{rem:modeling-gap}
The proof  of
\cite[Lemma~3.8]{KRS} has a gap: when the coordinates in $P$ are removed, Ramsey's
theorem is applied to the remaining nonleaf skeleton, and an arbitrary
copy of that skeleton is then assumed to admit the required leaves.
This need not hold in $\omega^{\leq\omega}$ (the meet-closed configuration $\emptyset, \langle0\rangle, 
 \langle1,0,0,\ldots\rangle\in P, \langle2\rangle$ has a nonleaf skeleton whose copy
$(\emptyset,\langle0\rangle,\langle1\rangle)$ has no completing leaf). 
To fix it, for the finite structure $C$  in the
compactness argument, choose $r>|P(C)|$ and let $
 H:=h(\omega^{<\omega})\subseteq\omega^{<\omega}$, $h(\eta)(i):=r(\eta(i)+1)$, then  $H\cong\omega^{<\omega}$ as $L_0$-structures. Every $\bar\mu\in q_{-,i}(H)$ admits an extension
$\bar\zeta\in q_i(\omega^{\leq\omega})$ with
$\bar\zeta_-=\bar\mu$: at every node, two consecutive successor
directions belonging to $H$ have 
$\geq r-1$  directions in $\omega^{<\omega}$ between them, and there are $\geq r$ unused directions below the first one and above the last one. Thus the  required leaves can be
inserted in their prescribed $\leq_{\mathrm{lex}}$-cuts; this preserves all meets and $\leq_{\mathrm{lex}}$. Hence the restricted colorings $c_{-,i}$ are defined on $q_{-,i}(H)$ as in
\cite[Lemma~3.8]{KRS} and are well-defined by its Lemma~3.7 and the
$s$-indiscernibility of the source tree. As $H\cong\omega^{<\omega}$, the same Ramsey theorem applied inside
$H$ yields a copy $C'_-\cong C_-$ on which all the restricted colorings
are constant, and we can extend $C'_-$ to
an $L_{0,P}$-copy $C'\cong C$ by the above. 
   \end{remark}

\begin{fact} \cite[Lemma~7.9]{KRS}\label{lem:treetop-witness}
	If $\varphi(x;y)$ has $\mathrm{SOP}_2$, then there is 
a treetop indiscernible array 
$(a_\eta)_{\eta\in\omega^{\leq\omega}}$ satisfying:
\begin{enumerate}
\item if $\eta,\nu\in\omega^{<\omega}$ are incomparable, then
  $\{\varphi(x;a_\eta),\varphi(x;a_\nu)\}$ is inconsistent;
\item if $\sigma\in\omega^\omega$ and $\eta\triangleleft\sigma$, then
  $\mathbb M\models\varphi(a_\sigma;a_\eta)$.
\end{enumerate}

\end{fact}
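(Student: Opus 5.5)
The plan is to build a raw array on $\omega^{\leq\omega}$ straight from an $\mathrm{SOP}_2$ witness, and then apply Fact~\ref{fact:modeling}. Both required properties are expressed by single formulas in pairs whose quantifier-free $L_{0,P}$-type is fixed, so they should transfer through the ``locally based on'' relation.

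\emph{Step 1: the raw array.} Let $(b_\eta)_{\eta\in\omega^{<\omega}}$ witness that $\varphi(x;y)$ has $\mathrm{SOP}_2$, as in Definition~\ref{def: SOP2}. Define $(e_\eta)_{\eta\in\omega^{\leq\omega}}$ as follows.
\begin{itemize}
\item For $\eta\in\omega^{<\omega}$, put $e_\eta:=b_\eta$.
\item For a leaf $\sigma\in\omega^\omega$, the set $\{\varphi(x;b_{\sigma\upharpoonright n}):n<\omega\}$ is consistent by clause (1) of Definition~\ref{def: SOP2}. Let $e_\sigma$ be a realization of it in $\mathbb M$.
\end{itemize}
Leaves then carry tuples in the sort of $x$, and nonleaves carry tuples in the sort of $y$; this is allowed by the definition of treetop indiscernible. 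The array $(e_\eta)$ satisfies both conclusions of the statement:
\begin{itemize}
\item for incomparable $\eta,\nu\in\omega^{<\omega}$, $\neg\exists x(\varphi(x;e_\eta)\wedge\varphi(x;e_\nu))$ holds;
\item for $\eta\triangleleft\sigma\in\omega^\omega$, $\models\varphi(e_\sigma;e_\eta)$ holds.
\end{itemize}

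\emph{Step 2: modeling.} By Fact~\ref{fact:modeling}, there is a treetop indiscernible $(a_\eta)_{\eta\in\omega^{\leq\omega}}$ that is locally based on $(e_\eta)$.

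\emph{Step 3: transfer of the properties.} Both transfers use the parenthetical remark after the definition of ``locally based on'', applied to a fixed quantifier-free $L_{0,P}$-type of a pair.
\begin{itemize}
\item \emph{Property (1).} Let $\eta,\nu\in\omega^{<\omega}$ be incomparable, and let $\theta(y,y')$ be $\neg\exists x(\varphi(x;y)\wedge\varphi(x;y'))$. Any pair $(\eta',\nu')$ with $\qftp_{L_{0,P}}(\eta',\nu')=\qftp_{L_{0,P}}(\eta,\nu)$ consists of non-leaves, since $\neg P$ is part of the type. The two indices are also incomparable, since $\unlhd$ is in the language. So $\theta(e_{\eta'},e_{\nu'})$ holds for every such pair by Step~1, and hence $\theta(a_\eta,a_\nu)$ holds.
\item \emph{Property (2).} Let $\eta\triangleleft\sigma$ with $\sigma\in\omega^\omega$, and let $\theta$ be $\varphi$ applied to the pair in the order (leaf, node). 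Any pair with the same quantifier-free type as $(\sigma,\eta)$ is a leaf together with a proper ancestor of it, which is a non-leaf. So $\theta$ holds of the corresponding $e$-pair by Step~1, and therefore $\models\varphi(a_\sigma;a_\eta)$.
\end{itemize}

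\emph{Main obstacle.} There is no real difficulty once Fact~\ref{fact:modeling} is available; the combinatorial work is contained in that Fact, including the repair described in Remark~\ref{rem:modeling-gap}. The points to be careful about are bookkeeping.
\begin{itemize}
\item The predicate $P$ must be included in the index language. Otherwise leaves and non-leaves, which carry tuples of different sorts, could share a quantifier-free type, and the transferred formulas would not even be well-formed.
\item Within each pair the order of the coordinates must be fixed, so that $\theta$ is evaluated with the leaf in the $x$-slot.
\end{itemize}
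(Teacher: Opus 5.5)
Your proposal is correct. It is the standard argument behind the cited \cite[Lemma~7.9]{KRS}; the paper gives no separate proof, relying on that citation together with the repaired modeling result Fact~\ref{fact:modeling}. You extend the $\mathrm{SOP}_2$ witness by realizations of the branch types at the leaves, apply Fact~\ref{fact:modeling}, and transfer each property through local basedness, since each property is a single formula true on every index pair of the relevant quantifier-free $L_{0,P}$-type.
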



\section{\texorpdfstring{The implication
$\mathrm{SOP}_2\Rightarrow\mathrm{SOP}_3$}
{The implication SOP2 implies SOP3}}

\begin{lemma}\label{lem:transport}
Fix $m<\omega$ and $1\leq k<\omega$, and let
\[
 \lambda_t := 0^t{}^\frown1^\frown0^\omega\quad(t\leq m),
 \qquad \rho_l := 0^{m+1+l}\quad(l<k).
\]
There are leaves $\lambda_t^a$ and nonleaves $\mu^a$ and 
$\rho_l^a$,  for all $a<\omega$, $t\leq m$ and $l<k$, such that, for every $a<b < \omega$:
\begin{enumerate}
\item $\mu^a\triangleleft\lambda_t^b$ for all $t\leq m$;
\item $\rho_l^a\perp\mu^b$ for all $l<k$;
\item $
 \qftp_{L_{0,P}}\bigl((\lambda_t^a)_{t\leq m},
                         (\rho_l^b)_{l<k}\bigr)
 =
 \qftp_{L_{0,P}}\bigl((\lambda_t)_{t\leq m},
                         (\rho_l)_{l<k}\bigr)
$. \label{eq:qftp-transport}
\end{enumerate}
\end{lemma}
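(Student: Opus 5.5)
We prove Lemma~\ref{lem:transport} by writing down explicit nodes: a copy of the configuration $(\lambda_t)_{t\le m},(\rho_l)_{l<k}$ placed along the zero spine $0^\omega$, shifted by a fixed period for each index $a$, with each $\rho$-block pushed off the spine just before the next period so that it becomes incomparable with all later $\mu$'s.

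Put $N:=m+2$ and, for $a<\omega$, $t\le m$, $l<k$, define
\[
 \mu^a:=0^{(a+1)N},\qquad
 \lambda^a_t:=0^{aN+t}{}^\frown1^\frown0^\omega,\qquad
 \rho^a_l:=0^{aN+m+1}{}^\frown1^\frown0^{l}.
\]
The $\lambda^a_t$ are leaves and $\mu^a,\rho^a_l$ are nonleaves. We check the three clauses for $a<b$.

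\textbf{Clause (1).} We have $\lambda^b_t\trianglerighteq 0^{bN}\trianglerighteq0^{(a+1)N}=\mu^a$, and the relation is strict because $\lambda^b_t$ is a leaf.

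\textbf{Clause (2).} $\rho^a_l$ has a $1$ at position $aN+m+1$. Since $N=m+2$, this position is $<(a+1)N\le bN$, while $\mu^b=0^{(b+1)N}$ has $0$ there. Also $\rho^a_l$ has length $aN+m+2+l$, which is $\ge aN+m+2$. So $\rho^a_l$ and $\mu^b$ differ at a position below both their lengths, and $\rho^a_l\perp\mu^b$.

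\textbf{Clause (3).} This is the main point. Consider the map $\lambda_t\mapsto\lambda^a_t$, $\rho_l\mapsto\rho^b_l$. We compare all atomic data: $P$, $\unlhd$, $\leq_{\mathrm{lex}}$, and the meets. The latter are determined by, for each pair, which position-and-digit the meet sits at.

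\emph{Leaves.} $P$ holds exactly of the $\lambda$'s on both sides.

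\emph{Among the $\lambda$'s.} For $t<s$ we have $\lambda_t\wedge\lambda_s=0^t$ and $\lambda_s<_{\mathrm{lex}}\lambda_t$. Correspondingly $\lambda^a_t\wedge\lambda^a_s=0^{aN+t}$, with the same lex order, via the order-preserving level map $j\mapsto aN+j$ on levels $\le m$.

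\emph{Among the $\rho$'s.} They form the chain $\rho_0\triangleleft\rho_1\triangleleft\cdots$, and so do the $\rho^b_l$.

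\emph{Mixed pairs.} $\rho_l\wedge\lambda_t=0^t$, since $\rho_l$ has $0$ at position $t\le m$, and $\rho_l<_{\mathrm{lex}}\lambda_t$. Now $\lambda^a_t$ has its $1$ at position $aN+t\le aN+m<bN\le bN+m+1$. Since $\rho^b_l$ is all zeros up to position $bN+m$, we get $\rho^b_l\wedge\lambda^a_t=0^{aN+t}$ and $\rho^b_l<_{\mathrm{lex}}\lambda^a_t$.

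\emph{Meets of meets.} All meets in both configurations lie on the zero spine. Their mutual $\unlhd$-order is given by the levels: $t$ versus $aN+t$ for the $\lambda$-meets, and all of them lie strictly below every $\rho$-node on both sides. This order is preserved.

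Hence the $L_{0,P}$-substructures generated by the two tuples are isomorphic via this map, which gives clause (3).

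The only delicate point is the choice of $N$. It must make the branching point $aN+m+1$ of $\rho^a$ strictly precede $(a+1)N$, which gives clause (2). It must also make every $\lambda^a$-branching point lie strictly below the $\rho^b$-block for $b>a$, so that the $\rho^b$ lie lex-left of the $\lambda^a$ and above their meets, as in clause (3). Both requirements follow from $N=m+2$.
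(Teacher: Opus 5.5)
Your proposal is correct and follows essentially the same route as the paper. Both place the configuration periodically along the zero spine and then check clauses (1)--(3) directly, with clause (3) verified by computing the meet-closure and the lexicographic order. The only differences are cosmetic: you use period $m+2$, put $\mu^a$ at the end of the $a$-th block, and branch $\rho^a_l$ just past all the $\lambda^a_t$, whereas the paper uses period $m+1$, $\mu^a=0^{(m+1)a}$, and branches $\rho^a_l$ at level $(m+1)a+m$.
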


\begin{proof}
For $a<\omega$, define
\begin{gather*}
	 \mu^a :=0^{(m+1)a}, \qquad  \lambda_t^a:=0^{(m+1)a+t}{}^\frown1^\frown0^\omega, \qquad \rho_l^a :=0^{(m+1)a+m}{}^\frown1^\frown0^l.
\end{gather*}
If $a<b$, then $(m+1)b\geq(m+1)a+m+1$, so $\mu^a$ is a strict
initial segment of every $\lambda_t^b$.  Also, $\rho_l^a$ has a $1$
in coordinate $(m+1)a+m$, whereas $\mu^b$ is defined there and has
value $0$.  Hence $\rho_l^a\perp\mu^b$, proving (1) and (2).

For (3), fix $a<b$, put $h_t=0^t$ and
$h_t'=0^{(m+1)a+t}$, and consider the map
\[
 \lambda_t\longmapsto\lambda_t^a,\qquad
 \rho_l\longmapsto\rho_l^b,\qquad
 h_t\longmapsto h_t'.
 \tag{3.2}\label{eq:closure-map}
\]
The meet-closures of the two tuples in (3) consist exactly of the
elements displayed in \eqref{eq:closure-map}.  Prefixing by
$0^{(m+1)a}$ preserves all meets among the $h_t$ and $\lambda_t$;
the $\rho_l$ and $\rho_l^b$ each form a strictly increasing chain under
$\unlhd$; and the mixed meets are
\[
 h_t\wedge\rho_l=h_t,\qquad
 \lambda_t\wedge\rho_l=h_t,
 \qquad
 h_t'\wedge\rho_l^b=h_t',\qquad
 \lambda_t^a\wedge\rho_l^b=h_t'.
\]
Thus \eqref{eq:closure-map} preserves $\wedge$.  It also preserves
$P$, which holds exactly on the $\lambda$-terms.  Finally, the full
lexicographic order on the first closure is
\[
 h_0<_{\rm lex}\cdots<_{\rm lex}h_m
 <_{\rm lex}\rho_0<_{\rm lex}\cdots<_{\rm lex}\rho_{k-1}
 <_{\rm lex}\lambda_m<_{\rm lex}\cdots<_{\rm lex}\lambda_0,
\]
and its image has the same order: because
$(m+1)b\geq(m+1)a+m+1$, every $\rho_l^b$ is still $0$ at the branching coordinate of
$\lambda_m^a$.  Since
$x\unlhd y$ is equivalent to $x\wedge y=x$, the map is an
$L_{0,P}$-isomorphism of the generated substructures.  This proves
\eqref{eq:qftp-transport}.
\end{proof}

\begin{theorem}\label{thm:hard}
If $T$ has $\mathrm{SOP}_2$, then $T$ has $\mathrm{SOP}_3$.
\end{theorem}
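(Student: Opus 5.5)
We start from Fact~\ref{lem:treetop-witness}: fix a treetop indiscernible array $(a_\eta)_{\eta\in\omega^{\leq\omega}}$ with $\varphi(x;a_\eta)$ pairwise inconsistent along incomparable nonleaves and $\models\varphi(a_\sigma;a_\eta)$ whenever the leaf $\sigma$ extends $\eta$. The goal is to produce $\alpha,\beta$ and a sequence $(v_i,p_i)_{i<\omega}$ as in Lemma~\ref{lem:triangle-criterion}. The guiding picture comes from Lemma~\ref{lem:transport}: the sequence is indexed by $a<\omega$. Each term has a ``parameter part'' built from $a_{\mu^a}$ and the $a_{\rho^a_l}$, and a ``witness part'' built from the leaves $a_{\lambda^a_t}$. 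Clause (1) of the transport lemma gives $\varphi(a_{\lambda^b_t};a_{\mu^a})$ for $a<b$, which will be the source of $\beta(v_j;p_i)$. Clause (2) gives inconsistency of $\varphi(x;a_{\rho^a_l})\wedge\varphi(x;a_{\mu^b})$, which will be the source of the inconsistency of $\alpha(v;p_i)\wedge\beta(v;p_j)$ in the form required by Lemma~\ref{lem:triangle-criterion}. Clause (3) says the mixed configuration $(\lambda^a_t)_t,(\rho^b_l)_l$ for $a<b$ has the same quantifier-free type as the fixed model configuration $(\lambda_t)_{t\le m},(\rho_l)_{l<k}$. By treetop indiscernibility, any formula true of $(a_{\lambda_t}),(a_{\rho_l})$ therefore transfers to $\alpha(v_a;p_b)$.

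Concretely, I would aim for $v_i=(a_{\lambda^i_t})_{t\le m}$ and $p_i=(a_{\mu^i},(a_{\rho^i_l})_{l<k})$. I would set $\beta(v;p):=\bigvee_{t}\varphi(v_t;y_\mu)$, or a conjunction over $t$, with $y_\mu$ the $\mu$-coordinate of $p$. I would set $\alpha(v;p):=\chi(v;(y_{\rho_l})_l)$, where $\chi$ is some formula satisfied by $((a_{\lambda_t})_t,(a_{\rho_l})_l)$ in the model configuration. Condition (1) of Lemma~\ref{lem:triangle-criterion} then holds automatically for $i<j$, by Lemma~\ref{lem:transport}(1) and (3). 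The whole difficulty is condition (2): for $i<j$, no tuple $v$ can satisfy both $\chi(v;(a_{\rho^i_l})_l)$ and $\varphi(v_t;a_{\mu^j})$. Since $\rho^i_l\perp\mu^j$, it suffices that $\chi$ forces some $v_t$ to satisfy $\varphi(v_t;a_{\rho_l})$ for some $l$.

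This is where I expect the main obstacle, and where the case split enters. If $\chi$ simply contained $\varphi(v_t;y_{\rho_l})$, it would have to hold in the model configuration, i.e.\ $\varphi(a_{\lambda_t};a_{\rho_l})$. This is false in general, since $\lambda_t\perp\rho_l$ there. So I would split according to whether $\{\varphi(x;a_{\rho_l}):l<k\}\cup\{\neg\varphi(x;a_{\rho_l})\}$-type information is realized next to the leaves. \emph{Case A}: for some $m,k$, the type of $(a_{\lambda_t})_{t\le m}$ over $(a_{\rho_l})_{l<k}$ implies, via a formula $\chi$, that every realization has some coordinate lying in some $\varphi(x;a_{\rho_l})$. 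This can be phrased through compactness and indiscernibility as ``$\mathrm{tp}((a_{\lambda_t})_t/(a_{\rho_l})_l)\cup\{\bigwedge_{t,l}\neg\varphi(v_t;a_{\rho_l})\}\cup\{\varphi(v_t;a_{0^{m+1}\cdots})\}$ is inconsistent''. In that case the construction above gives $\mathrm{SOP}_3$. \emph{Case B}: for all $m,k$ this type is consistent. Then I would use consistency with growing $m,k$, together with treetop indiscernibility and Fact~\ref{fact:modeling}, to build a new array in which leaves off the main branch $0^\omega$ nonetheless satisfy $\varphi$ with deep nodes $\rho_l$ on that branch. Concretely, this means realizing $\varphi(x;a_{0^n})$ for all $n$ together with $\varphi(x;a_\eta)$ for some $\eta$ off the branch. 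That contradicts the inconsistency clause of $\mathrm{SOP}_2$, so Case B cannot occur (or yields $\mathrm{SOP}_3$ by a symmetric choice of $\alpha,\beta$).

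Making the Case A/Case B dichotomy precise, so that Case B genuinely contradicts clause (1) of Fact~\ref{lem:treetop-witness}, is the step I expect to require the most care.
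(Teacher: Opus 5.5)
Your verification of clause (1) of Lemma~\ref{lem:triangle-criterion} is fine. However, there is a genuine gap exactly at the step you flag, and it makes your dichotomy degenerate. Your sufficient condition for clause (2) asks for a formula $\chi$ true of $\bigl((a_{\lambda_t})_{t\le m},(a_{\rho_l})_{l<k}\bigr)$ such that every realization of $\chi(\bar v;(a_{\rho_l})_{l<k})$ has some coordinate $v_t$ with $\models\varphi(v_t;a_{\rho_l})$ for some $l$. By indiscernibility this is the same requirement over $(a_{\rho^i_l})_l$. No such $\chi$ exists, for any $m,k$. The tuple $(a_{\lambda_t})_{t\le m}$ is itself a realization, yet $\models\neg\varphi(a_{\lambda_t};a_{\rho_l})$ for all $t\le m$ and $l<k$: by Fact~\ref{lem:treetop-witness}(2), $a_{\lambda_t}$ realizes $\varphi(x;a_{0^t{}^\frown1})$, and $0^t{}^\frown1\perp\rho_l$, so Fact~\ref{lem:treetop-witness}(1) applies.

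So the problem you noticed for the literal conjunct $\varphi(v_t;y_{\rho_l})$ kills every candidate $\chi$. Your Case A never occurs, and every $\mathrm{SOP}_2$ theory lands in Case B. Hence Case B cannot be refuted, and nothing in its hypothesis produces an $x$ realizing all $\varphi(x;a_{0^n})$ together with an off-branch $\varphi(x;a_\eta)$.

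The compactness reformulation does not help. The conjunct $\bigwedge_{t,l}\neg\varphi(v_t;a_{\rho_l})$ is already part of $\mathrm{tp}((a_{\lambda_t})_t/(a_{\rho_l})_l)$. Adding $\varphi(v_t;a_\nu)$ for $\nu$ on the main branch $0^\omega$ tests the wrong parameter: such $\nu$ is comparable with the $\rho_l$ (and the type is trivially inconsistent if $\nu$ is one of them). By contrast, the parameter $a_{\mu^j}$ occurring in $\beta(v;p_j)$ is incomparable with the chain $(\rho^i_l)_l$. As written, the proposal never produces an $\mathrm{SOP}_3$ witness.

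The missing idea is to place the obstruction in the \emph{parameter} variable. The paper uses $I(y,y')=\neg\exists x\bigl(\varphi(x,y)\wedge\varphi(x,y')\bigr)$ and the types $\Gamma_i(y)=\{\varphi(c_t,y):t\le i\}\cup\{I(s_j,y):j>i\}$, and it makes \emph{both} sides of the case split constructive instead of trying to exclude one.

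If every $\Gamma_i$ is consistent, take realizations $v_i\models\Gamma_i$ with $p_i=(s_i,c_i)$, $\alpha=I(y_0,v)$ and $\beta=\varphi(y_1,v)$. These satisfy Lemma~\ref{lem:triangle-criterion} directly, using $\models\varphi(c_j,s_i)$ for $i\le j$.

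If some $\Gamma_m$ is inconsistent, compactness gives \eqref{eq:canonical-obstruction}, and the roles are the transpose of yours. Here $v_i=a_{\mu^i}$ is a nonleaf and $p_i=\bigl((a_{\lambda^i_t})_t,(a_{\rho^i_l})_l\bigr)$. The formula $\alpha=\bigwedge_t\varphi(x_t,v)$ holds by Lemma~\ref{lem:transport}(1), and $\beta=\bigwedge_l I(u_l,v)$ holds by Lemma~\ref{lem:transport}(2). The mixed configuration of Lemma~\ref{lem:transport}(3) is then used to transport \eqref{eq:canonical-obstruction} into the required \emph{inconsistency}, not into a positive instance.

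Your arrangement can be salvaged by taking Case A to be the inconsistency of $\{\varphi(a_{\lambda_t};y):t\le m\}$ with the copy over $(a_{\rho_l})_l$ of $\mathrm{tp}(a_{\mu^j}/(a_{\rho^i_l})_l)$. This follows from the inconsistency of some $\Gamma_m$. Its failure, however, only tells you that every $\Gamma_i$ is consistent. You would still have to turn that situation into $\mathrm{SOP}_3$ as above, and you give no reason to expect a contradiction there.
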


\begin{proof}
Apply Fact~\ref{lem:treetop-witness} to a formula $\varphi(x,y)$ witnessing
$\mathrm{SOP}_2$, and let  
$(a_\eta)_{\eta\in\omega^{\leq\omega}}$ be the resulting 
treetop indiscernible.
Define $
 I(y,y'):=\neg\exists x
 \bigl(\varphi(x,y)\wedge\varphi(x,y')\bigr)$. 
For $j,t<\omega$, let 
\[
 \lambda_t := 0^t{}^\frown1^\frown0^\omega,
 \qquad s_j := a_{0^j},
 \qquad c_t := a_{\lambda_t}.
\]
Since $0^j\triangleleft\lambda_t$ whenever $j\leq t$,
Fact~\ref{lem:treetop-witness}(2) gives
\[
 \mathbb M\models\varphi(c_t,s_j)
 \quad\text{for all }j\leq t.
 \tag{3.4}\label{eq:ancestor-instance}
\]

For $i<\omega$, consider the partial type
\[
 \Gamma_i(y)=
 \{\varphi(c_t,y):t\leq i\}
 \cup\{I(s_j,y):j>i\}.
\]
The proof splits into two exhaustive cases.

\medskip
\noindent\emph{Case 1: $\Gamma_i$ is consistent for all $i < \omega$.}
Choose $v_i\models\Gamma_i$, put $p_i=(s_i,c_i)$, and define
\[
 \alpha(v;y_0,y_1):=I(y_0,v),
 \qquad \beta(v;y_0,y_1):=\varphi(y_1,v).
\]
If $i<j$, the definitions of $\Gamma_i$ and $\Gamma_j$ give
$\alpha(v_i;p_j)$ and $\beta(v_j;p_i)$.  And  $
 \{\alpha(v;p_i),\beta(v;p_j)\}
 =\{I(s_i,v),\varphi(c_j,v)\}$ 
is inconsistent: by \eqref{eq:ancestor-instance},
$ \models \varphi(c_j,s_i)$, so any realization $v$ would make $c_j$ witness
the negation of $I(s_i,v)$.  Lemma~\ref{lem:triangle-criterion}
therefore gives $\mathrm{SOP}_3$.

\medskip
\noindent\emph{Case 2: $\Gamma_m$ is inconsistent for some $m$.}
Its positive part $\{\varphi(c_t,y):t\leq m\}$ is realized by $s_0$
by \eqref{eq:ancestor-instance}.  Compactness therefore gives a
finite inconsistent subfamily of $\Gamma_m$ containing at least one
formula of the form $I(s_j,y)$.  Add all omitted positive formulas,
and let $N>m$ be the largest index occurring among its formulas of
the form $I(s_j,y)$.  Adding the finitely many formulas $I(s_j,y)$
with $m<j\leq N$ preserves inconsistency.  Setting $k=N-m$, we obtain
\begin{equation}
 \models\neg\exists y\left(
   \bigwedge_{t\leq m}\varphi(c_t,y)
   \wedge\bigwedge_{l<k}I(s_{m+1+l},y)
 \right).
 \tag{3.6}\label{eq:canonical-obstruction}
\end{equation}

Apply Lemma~\ref{lem:transport} to $m,k$.  Let 
$p=(\bar x,\bar u)$, where
$\bar x=(x_t)_{t\leq m}$ and $\bar u=(u_l)_{l<k}$, and 
\begin{gather*}
	\alpha(v;p):=\bigwedge_{t\leq m}\varphi(x_t,v),
 \qquad
 \beta(v;p):=\bigwedge_{l<k}I(u_l,v).
\end{gather*}
For $i<\omega$, let 
\[
 v_i:=a_{\mu^i},\qquad
 p_i:=\left((a_{\lambda_t^i})_{t\leq m},
            (a_{\rho_l^i})_{l<k}\right).
\]
Fix $i<j$.  By Lemma~\ref{lem:transport}(1), $\mu^i$ is a proper
ancestor of every $\lambda_t^j$. Fact~\ref{lem:treetop-witness}(2)
therefore gives 
$\models \varphi(a_{\lambda_t^j}, a_{\mu^i})$ for every $t\leq m$, so $\models \alpha(v_i;p_j)$. By Lemma~\ref{lem:transport}(2), every $\rho_l^i, l < k$
is incomparable with $\mu^j$, so Fact~\ref{lem:treetop-witness}(1)
therefore gives $\models I(a_{\rho_l^i},a_{\mu^j})$ for every $l<k$, so  $
 \models\beta(v_j;p_i)$.  Combining, $
 \models \alpha(v_i;p_j) \land \beta(v_j;p_i)$. 

On the other hand, Lemma \ref{lem:transport}(3) and treetop indiscernibility give
\[
 \left((a_{\lambda_t^i})_{t\leq m},
       (a_{\rho_l^j})_{l<k}\right)
 \equiv
 \left((c_t)_{t\leq m},(s_{m+1+l})_{l<k}\right).
\]
Combined with \eqref{eq:canonical-obstruction}, this says exactly that
$\{\alpha(v;p_i),\beta(v;p_j)\}$ is inconsistent.  All hypotheses of
Lemma~\ref{lem:triangle-criterion} hold, so $T$ has
$\mathrm{SOP}_3$.
\end{proof}

%
%

\bibliographystyle{plain}
\bibliography{SOP3}

\end{document}